\titleformat{\section}[hang]{\small\bf}{\thesection}{1em}{}
\newtheorem{definition}{Definition}[section]
\newtheorem{theorem}{Theorem}[section]
\newtheorem{lemma}{Lemma}[section]
\begin{document}

    \title{ Katok's Entropy Formula of Unstable Metric Entropy for Partially Hyperbolic Diffeomorphisms
 \footnotetext {* Corresponding author}}
\author{Ping Huang$^{1,2}$, Ercai Chen$^{*2,3}$, Chenwei Wang$^{1}$\\
\small 1 College of Mathematical and Physical Sciences, Taizhou University,\\
     \small  Taizhou 225300, Jiangsu, P.R. China.\\
  \small   2 School of Mathematical Sciences and Institute of Mathematics, Nanjing Normal University,\\
   \small   Nanjing 210023, Jiangsu, P.R.China.\\
    \small 3 Center of Nonlinear Science, Nanjing University,\\
     \small   Nanjing 210093, Jiangsu, P.R.China.\\
      \small    e-mail: pinghuang1984@163.com ecchen@njnu.edu.cn chenweiwang01@163.com
}
\date{}
\maketitle

    \noindent{\small \textbf{Abstract}:
 Katok's entropy formula is an important formula in entropy theory. This paper is devoted to establishing   Katok's entropy formula of unstable metric entropy which is the entropy caused by the unstable part of partially hyperbolic systems.

    \noindent \textbf{Keywords}:
 Katok's entropy formula, unstable metric entropy, partially hyperbolic systems,  measure decomposition}

\section{Introduction}
 Let a triple $(X,d,f)$   be a topological dynamical system  in the sense that $f: X\rightarrow X$ is a continuous map on the compact metric space $X$ with metric $d$.   For $x,y\in X$ and $n\in \mathbb N$, the Bowen metric $d_n$ is given by
 \begin{equation*}
  d_n(x,y)=\max\{d(f^i(x),f^i(y)):i=0,1,\cdots,n-1\}.
 \end{equation*}
  Given $ \varepsilon >0$, $x\in X$ and $n\in \mathbb N$, let $B_{n}(x,\varepsilon)=\{y\in X : d_n(x,y)<\varepsilon\}$ denote the $d_n$-ball centered at $x$ with radius $\varepsilon$.

  In classical ergodic theory, measure-theoretic entropy and topological entropy are very important determinants of complexity in dynamical systems. And the variational principle reveals the relationship between the measure-theoretic entropy and the topological entropy. Katok's entropy formula which plays an important role in the study of entropy theory is an equivalent definition of the measure-theoretic entropy in a manner analogous to the definition of the topological entropy.

In 1980, Katok \cite{[Katok]} introduced the Katok's entropy formula: for any  $f$-invariant ergodic Borel probability measure $\mu$, and $0< \delta <1$,
\begin{equation*}\label{12}
\lim\limits_{\varepsilon \rightarrow 0}\limsup\limits_{n \rightarrow \infty }\frac{\log N_{\mu}(n, \varepsilon, \delta)}{n}=\lim\limits_{\varepsilon \rightarrow 0}\liminf\limits_{n \rightarrow \infty }\frac{\log N_{\mu}(n, \varepsilon, \delta)}{n}=h_{\mu}(f),
\end{equation*}
where $N_{\mu}(n, \varepsilon, \delta)$ denotes the minimal number of $d_n$-balls with radius $\varepsilon$ whose union is  a set of $\mu$-measure more than or equal to $1-\delta$.

In 2004, using spanning sets, He, Lv and Zhou \cite{[He]} introduced a  definition of measure-theoretic pressure of additive potentials for ergodic  measures, and obtained a pressure version of  Katok's entropy formula. In 2009,  Zhao and Cao \cite{[yunzhao]} gave a definition of  measure-theoretic pressure of  sub-additive potentials for ergodic  measures, and generalized the above results in \cite{[Katok]} and \cite{[He]}. Moreover, we refer to \cite{[zhao], [Cao]} for more  pressure versions of Katok's entropy formula. In 2009, Zhu  \cite{[Zhu2]} established a random version of Katok's entropy formula. In 2017, Zheng, Chen and Yang \cite{[zheng]} introduced an amenable version of Katok's entropy formula for countable discrete amenable group actions.  Huang, Wang and Ye \cite{[Huang]}  established Katok's entropy formula for ergodic  measures in the case of mean metrics.  In 2018,  Huang, Chen and Wang \cite{[Ping1]} constructed Katok's entropy formula of conditional entropy in mean metrics, where the conditional entropy is with respect to a $T$-invariant sub-$\sigma$-algebra. In 2019, Huang and Wang \cite{[Ping2]}  established a pressure version of Katok's entropy formula in the case of mean metrics.

Let $M$ be an $n$-dimensional smooth, connected and compact Riemannian manifold without boundary and $f: M \rightarrow M$ be a $C^1$-diffeomorphism. $f$ is  said to be partially hyperbolic if there exists a nontrivial $Df$-invariant splitting $TM=E^s\oplus E^c\oplus E^u$ of the tangent bundle into stable, center, and unstable distributions, such that all unit vectors $v^{\sigma}\in E^{\sigma}_x$ ($\sigma=c, s, u$) with $x\in M$ satisfy

\begin{equation*}
  \Vert D_xfv^s\Vert<\Vert D_xfv^c\Vert <\Vert D_xfv^u\Vert,
\end{equation*}
and
\begin{equation*}
  \Vert D_xf|_{E_x^s}\Vert<1 \quad \textrm{and} \quad \Vert D_xf^{-1}|_{E_x^u}\Vert<1,
\end{equation*}
for some suitable Riemannian metric on $M$. The stable distribution $E^s$ and unstable distribution $E^u$ are integrable to the stable and unstable foliations $W^s$ and $W^u$ respectively such that $DW^s=E^s$ and $DW^u=E^u$.

In 1985, from the measure theoretic point of view, Ledrappier and Young \cite{[Young]} gave a definition of unstable metric entropy for partially hyperbolic diffeomorphisms. And the entopy defined in \cite{[Young]} can be regarded as that given by $H_{\mu}(\alpha|f\alpha)$, where $\alpha$ is an increasing partition (i.e. $\alpha\geq f\alpha$)  subordinate to the
unstable leaves.
In 2008, from the topological point of view,  Hua, Saghin and Xia \cite{[Hua]} provided the unstable volume growth.
In 2017, Hu, Hua and Wu \cite{[HU]} introduced the definition of unstable metric entropy $h^u_{\mu}(f)$ for any
invariant measure $\mu$, and gave the definition of the unstable topological entropy $h^u_{\rm{top}}(f)$. Similar to that in the classical entropy theory, the corresponding versions of Shannon-McMillan-Breiman theorem, and the variational principle relating $h^u_{\mu}(f)$ and $h^u_{\rm{top}}(f)$ are given.
The unstable metric entropy $h^u_{\mu}(f)$ for an invariant measure $\mu$ is defined by using $H_{\mu}(\bigvee_{i=0}^{n-1}f^{-i}\xi|\eta)$, where $\xi$ is a finite measurable partition of the underlying manifold $M$, and $\eta$ is a measurable partition consisting of local unstable leaves that can be obtained by refining a finite partition into pieces of unstable leaves. In \cite{[HU]}, they showed that the unstable metric entropy  $h^u_{\mu}(f)$ is identical to the metric entropy defined in \cite{[Young]}.
Hu, Wu, Zhu \cite{[HU2]} introduced the notion of unstable topological pressure
 for a $C^1$-partially hyperbolic diffeomorphism $f : M \rightarrow M$ with respect to a  continuous function  on $M$, obtained a variational principle for this pressure. And they investigatedthe corresponding so-called $u$-equilibriums.
Motivated by the work of Bowen \cite{[Bo1]} and Pesin \cite{[Pesin2]}, Tian and Wu \cite{Tian} established a concept called Bowen unstable topological entropy which is the unstable topological entropy for any subsets (not necessarily compact or invariant) in partially hyperbolic systems as a Carath$\acute e$odory dimension characteristic. In particular, they proved the Bowen unstable topological entropy of the whole space coincides with the unstable topological entropy of the system in \cite{[HU]}.
And they also constructed some basic results in dimension theory for Bowen unstable topological entropy including a variational principle for any compact (not necessarily invariant) subset between its Bowen unstable topological entropy and unstable metric entropy of probability measures supported on this set.
 Wang, Wu and Zhu \cite{[Wang]} introduced the unstable entropy and unstable pressure for a random dynamical system with $u$-domination. For random diffeomorphisms with domination,  they also gave a version of Shannon-McMillan-Brieman theorem for  unstable metric entropy, and obtained a variational principle for unstable  pressure. In 2019, Wu \cite{[Wu]} introduced two notions of local unstable metric entropies and the notion of local unstable topological entropy relative to a Borel cover $\mathcal U$ of $M$, and showed that when $\mathcal U$ is an open cover with small diameter, the entropies coincide with the unstable metric entropy and unstable topological entropy, respectively.  And the unstable tail entropy and the unstable topological conditional entropy were also defined in \cite{[Wu]}.

 In this paper, inspired by the ideas of Katok \cite{[Katok]},  for ergodic measures, we establish  the Katok's entropy formula of unstable metric entropy $h^u_{\mu}(f)$  for partially hyperbolic diffomorphisms.

  The following theorems present the main results of this paper.

\begin{theorem}\label{theorem2}$($Katok's entropy formula  of unstable metric entropy$)$

Let $M$ be an $n$-dimensional smooth, connected and compact Riemannian manifold without boundary and  $f : M \rightarrow M$ be a $C^1$-partially hyperbolic diffeomorphism.
 Suppose $\mu$ is an ergodic measure of $f$. Let $\eta\in \mathcal P^u_{\varepsilon_0}$,  and the measure disintegration of $\mu$ over $\eta$ is
\begin{equation*}
  \mu=\int\mu_{x}^{\eta}\textrm{d}\mu(x).
\end{equation*}
Then for any  $0<\delta<1$, we have
\begin{equation*}
\lim\limits_{\varepsilon \rightarrow 0}\liminf\limits_{n\rightarrow \infty}\frac{\log {N}^{u}_{\mu_{x}^{\eta}}(n, \varepsilon, \delta)}{n}=\lim\limits_{\varepsilon \rightarrow 0}\limsup\limits_{n\rightarrow \infty}\frac{\log {N}^{u}_{\mu_{x}^{\eta}}(n, \varepsilon, \delta)}{n}=h^{u}_{\mu}(f)
\end{equation*}
for $\mu$-a.e. $x\in M$, where ${N}^{u}_{\mu_{x}^{\eta}}(n, \varepsilon, \delta)$ denotes the minimal number of $ d_n^u$-balls with radius $\varepsilon$ whose union has  $\mu_{x}^{\eta}$-measure more than or equal to $1-\delta$.
\end{theorem}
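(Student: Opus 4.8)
The plan is to run Katok's original two-sided argument with the ambient Bowen balls and the measure $\mu$ replaced by the unstable Bowen balls $B_n^u(y,\varepsilon)$ and the conditional measures $\mu_x^\eta$. Everything reduces to an unstable Brin--Katok local entropy formula for the conditional measures: for $\mu$-a.e.\ $x$ and $\mu_x^\eta$-a.e.\ $y\in\eta(x)$,
\[
\lim_{\varepsilon\to0}\liminf_{n\to\infty}-\tfrac1n\log\mu_x^\eta\big(B_n^u(y,\varepsilon)\big)=\lim_{\varepsilon\to0}\limsup_{n\to\infty}-\tfrac1n\log\mu_x^\eta\big(B_n^u(y,\varepsilon)\big)=h^u_\mu(f).
\]
I would establish this as a lemma first, then deduce the theorem by counting. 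A useful preliminary observation is that, since $f$ uniformly expands $E^u$, for $y,z$ on a common unstable leaf one has $d^u(f^iy,f^iz)\le C\,d^u(f^{n-1}y,f^{n-1}z)$ for all $0\le i\le n-1$ with $C$ independent of $n$; hence $d_n^u(y,z)$ is comparable to $d^u(f^{n-1}y,f^{n-1}z)$ and, up to a fixed rescaling of $\varepsilon$, $B_n^u(y,\varepsilon)\cap\eta(x)=\eta(x)\cap f^{-(n-1)}\big(B^u(f^{n-1}y,\varepsilon)\big)$.

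To prove the lemma, fix a finite partition $\xi$ of $M$ with small diameter and $\mu(\partial\xi)=0$, and write $\xi_0^{n}:=\bigvee_{i=0}^{n-1}f^{-i}\xi$. The unstable Shannon--McMillan--Breiman theorem of \cite{[HU]} gives $-\tfrac1n\log\mu_x^\eta\big(\xi_0^n(y)\big)\to h^u_\mu(f,\xi)$ for $\mu$-a.e.\ $x$ and $\mu_x^\eta$-a.e.\ $y$, together with $\sup_\xi h^u_\mu(f,\xi)=h^u_\mu(f)$. On the plaque $\eta(x)$ one compares the atom $\xi_0^n(y)\cap\eta(x)$ with the unstable Bowen ball, controlling two discrepancies: (i) within a local unstable plaque $d^u$ and the ambient metric $d$ are uniformly equivalent and $Df$ has bounded distortion along unstable plaques, so the part of $\xi_0^n(y)$ that stays $d$-close to the orbit of $y$ sits inside $B_n^u(y,O(\operatorname{diam}\xi))$; (ii) because a long unstable plaque folds back near itself in $M$, $\xi_0^n(y)$ may in addition meet finitely many pieces of $f^{n-1}\eta(x)$ that are far in $d^u$ from $f^{n-1}y$, but a Bowen-type covering lemma --- obtained by applying Birkhoff's ergodic theorem to the $\mu$-measure of $\varepsilon$-neighbourhoods of $\partial\xi$ --- bounds the number of such pieces, and the number of $\xi$-atoms met by an unstable $\varepsilon$-ball along the orbit, by $e^{o(n)}$. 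Feeding these estimates into the unstable SMB theorem and then letting $\operatorname{diam}\xi\to0$ (so $h^u_\mu(f,\xi)\to h^u_\mu(f)$) and $\varepsilon\to0$ yields the lemma.

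Granting the lemma, I would argue on $\eta(x)$ for $\mu$-a.e.\ $x$. Fix $\gamma>0$; using the lemma, monotonicity of the local entropy quantities in $\varepsilon$, and Egorov's theorem, choose $\varepsilon>0$ and sets $G,C\subseteq\eta(x)$ with $\mu_x^\eta(G)$ as close to $1$ as needed and $\mu_x^\eta(C)>1-\delta$, together with $N_0$, so that $\mu_x^\eta\big(B_n^u(y,2\varepsilon)\big)\le e^{-n(h^u_\mu(f)-\gamma)}$ for $y\in G$ and $\mu_x^\eta\big(B_n^u(y,\varepsilon)\big)\ge e^{-n(h^u_\mu(f)+\gamma)}$ for $y\in C$, all $n\ge N_0$. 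For the lower estimate: if finitely many $d_n^u$-balls of radius $\varepsilon$ cover a set of $\mu_x^\eta$-measure $\ge1-\delta$, then any such ball meeting $G$ lies in a $d_n^u$-ball of radius $2\varepsilon$ centred in $G$, hence has measure $\le e^{-n(h^u_\mu(f)-\gamma)}$; summing over the cover forces at least $c(\delta)\,e^{n(h^u_\mu(f)-\gamma)}$ balls with $c(\delta)>0$, so $\liminf_n\tfrac1n\log N^u_{\mu_x^\eta}(n,\varepsilon,\delta)\ge h^u_\mu(f)-\gamma$. For the upper estimate: take a maximal $(n,2\varepsilon)$-separated subset $E_n\subseteq C$ in the $d_n^u$-metric; the balls $\{B_n^u(y,\varepsilon):y\in E_n\}$ are pairwise disjoint and each has measure $\ge e^{-n(h^u_\mu(f)+\gamma)}$, so $\#E_n\le e^{n(h^u_\mu(f)+\gamma)}$, while maximality gives $C\subseteq\bigcup_{y\in E_n}B_n^u(y,2\varepsilon)$, whence $N^u_{\mu_x^\eta}(n,2\varepsilon,\delta)\le\#E_n$ and $\limsup_n\tfrac1n\log N^u_{\mu_x^\eta}(n,2\varepsilon,\delta)\le h^u_\mu(f)+\gamma$. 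Letting $\gamma\to0$ along a sequence and using that $N^u_{\mu_x^\eta}(n,\varepsilon,\delta)$ is monotone in $\varepsilon$ (so the $\varepsilon$-limits exist and equal the suprema over small $\varepsilon$) gives both sides equal to $h^u_\mu(f)$.

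Finally, two bookkeeping points. All ``$\mu$-a.e.\ on $M$'' statements transfer to ``for $\mu$-a.e.\ $x$, for $\mu_x^\eta$-a.e.\ $y$'' through the disintegration $\mu=\int\mu_x^\eta\,d\mu(x)$, since a full $\mu$-measure set has full $\mu_x^\eta$-measure for $\mu$-a.e.\ $x$; and the sets $G$, $C$, $N_0$ above are chosen, for each rational $\gamma$ and each $\varepsilon$ in a fixed null sequence, after discarding an $x$-null set, so a single full-measure set of $x$ carries all the limits. I expect the main obstacle to be the unstable Brin--Katok lemma, and within it the lower bound: unlike in the classical setting an unstable Bowen ball is far thinner than the corresponding ambient Bowen ball, so the comparison with atoms of $\bigvee_{i=0}^{n-1}f^{-i}\xi$ is not a containment and genuinely requires the covering lemma together with $C^1$-distortion control along unstable plaques and the expansion-induced reduction of $d_n^u$ to its last coordinate.
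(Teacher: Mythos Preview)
Your proposal is correct in outline and uses the same core ingredients as the paper --- the unstable Shannon--McMillan--Breiman theorem of \cite{[HU]}, the comparison between atoms of $\xi_0^{n-1}$ restricted to a plaque and unstable Bowen balls, and the Birkhoff/name--counting argument controlling how many $\xi_0^{n-1}$-atoms an unstable Bowen ball can meet --- but the organization differs. You factor the argument through an unstable Brin--Katok local-entropy lemma for the conditional measures and then deduce the covering statement by an Egorov/maximal-separated-set argument; the paper instead proves the covering statement directly, using SMB plus counting of large-measure atoms for the upper bound and the $(\xi,n)$-name argument (your ``covering lemma'') for the lower bound. Both routes give the result; yours has the advantage of isolating the local formula as an independent statement, while the paper's direct route avoids having to first establish a pointwise lower bound on $\mu_x^\eta(B_n^u(y,\varepsilon))$.

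Two points where the paper's path is a little shorter than what you anticipate. First, the comparison $\xi_0^{n-1}(y)\cap\eta(x)\subset B_n^u(y,C\operatorname{diam}\xi)$ requires neither distortion control nor any analysis of the plaque ``folding back'': the paper's Lemma~\ref{lemma ball 2} shows $B_n(y,\varepsilon/C)\cap\eta(x)\subset B_n^u(y,\varepsilon)\cap\eta(x)$ by a one-line induction using only that $d$ and $d^u$ are uniformly equivalent on short plaques and that $f$ is Lipschitz along leaves, so your item~(ii) in the lemma's proof is not needed for that inclusion (it is, of course, needed --- in the form of the name-counting estimate --- for the reverse comparison). Second, you need not let $\operatorname{diam}\xi\to0$ to reach $h_\mu^u(f)$: Theorem~\ref{Theorem 2.1} (from \cite{[HU]}) already gives $h_\mu(f,\xi\mid\eta)=h_\mu^u(f)$ for every $\xi\in\mathcal P_{\varepsilon_0}$, so a single partition with $\mu(\partial\xi)=0$ suffices. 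With those simplifications your argument and the paper's are essentially the same in content.
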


See Section 2 for the definitions of $d_n^u$-ball, $\mathcal P^u_{\varepsilon_0}$ and $h^{u}_{\mu}(f)$.

Similarly, we have the following result:
\begin{theorem}\label{theorem3}
 Let $M$ be an $n$-dimensional smooth, connected and compact Riemannian manifold without boundary and  $f: M \rightarrow M$ be a $C^1$-partially hyperbolic diffeomorphism.	
Suppose $\mu$ is an ergodic measure of $f$. Let $\eta\in \mathcal P^u_{\varepsilon_0}$,
 and the measure disintegration of $\mu$ over $\eta$ is
\begin{equation*}
\mu=\int\mu_{x}^{\eta}\textrm{d}\mu(x).
\end{equation*}
Let  $\xi\in \mathcal P_{\varepsilon_0}$ satisfying $H_{\mu}(\xi|\eta)<\infty$.  Then for any  $0<\delta<1$, we have
\begin{equation*}
\lim\limits_{n\rightarrow \infty}\frac{\log {N}^{u}_{\mu_{x}^{\eta}}(n, \xi, \delta)}{n}=h^{u}_{\mu}(f)
\end{equation*}
for $\mu$-a.e. $x\in M$, where ${N}^{u}_{\mu_{x}^{\eta}}(n, \xi, \delta)$ denotes the minimal number of elements of the partition $\xi_0^{n-1}$ whose union has  $\mu_{x}^{\eta}$-measure more than or equal to $1-\delta$.
\end{theorem}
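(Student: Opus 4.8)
The plan is to derive Theorem~\ref{theorem3} from the Shannon--McMillan--Breiman theorem for unstable metric entropy established in \cite{[HU]} (recalled in Section~2), by a two-sided counting argument in the style of Katok; it runs parallel to, but is simpler than, the argument for Theorem~\ref{theorem2}, using atoms of $\xi_0^{n-1}$ in place of $d_n^u$-balls, since no $\varepsilon\to0$ limit is needed. Write $h:=h^u_\mu(f)$. Since $\xi\in\mathcal P_{\varepsilon_0}$ and $H_\mu(\xi|\eta)<\infty$, that theorem yields $h^u_\mu(f,\xi)=h$ together with
\[
-\frac1n\log\mu^\eta_z\big((\xi_0^{n-1})(z)\big)\;\xrightarrow[n\to\infty]{}\;h \qquad\text{for }\mu\text{-a.e. }z\in M .
\]
Because the conditional measures are a.e.\ constant along the atoms of $\eta$ and $\mu=\int\mu^\eta_x\,\mathrm d\mu(x)$, this $\mu$-a.e.\ statement transfers to the disintegration: for $\mu$-a.e.\ $x$ and $\mu^\eta_x$-a.e.\ $y$ one has $-\frac1n\log\mu^\eta_x\big((\xi_0^{n-1})(y)\big)\to h$. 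Fix such a good $x$, fix $\gamma>0$, and fix $\rho$ with $0<\rho<\min\{\delta,1-\delta\}$ (possible since $0<\delta<1$). Applying Egorov's theorem on the probability space $(\eta(x),\mu^\eta_x)$ produces $N\in\mathbb N$ and a set $G\subseteq\eta(x)$ with $\mu^\eta_x(G)>1-\rho$ such that
\[
e^{-n(h+\gamma)}\;\le\;\mu^\eta_x\big((\xi_0^{n-1})(y)\big)\;\le\;e^{-n(h-\gamma)}\qquad\text{for all }y\in G,\ n\ge N .
\]

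For the upper bound, consider the atoms of $\xi_0^{n-1}$ that meet $G$: they are pairwise disjoint and each has $\mu^\eta_x$-measure at least $e^{-n(h+\gamma)}$, so there are at most $e^{n(h+\gamma)}$ of them, and their union contains $G$ and hence has $\mu^\eta_x$-measure $>1-\rho>1-\delta$. Thus these atoms form a family realizing the definition of $N^u_{\mu^\eta_x}(n,\xi,\delta)$, whence $N^u_{\mu^\eta_x}(n,\xi,\delta)\le e^{n(h+\gamma)}$ for $n\ge N$. For the lower bound, let $\mathcal C$ be any family of atoms of $\xi_0^{n-1}$ with $\mu^\eta_x\big(\bigcup\mathcal C\big)\ge1-\delta$; then $\mu^\eta_x\big(G\cap\bigcup\mathcal C\big)\ge1-\delta-\rho>0$, and since every atom of $\mathcal C$ that meets $G$ has measure at most $e^{-n(h-\gamma)}$ for $n\ge N$, summation gives $|\mathcal C|\ge(1-\delta-\rho)\,e^{n(h-\gamma)}$, hence $N^u_{\mu^\eta_x}(n,\xi,\delta)\ge(1-\delta-\rho)\,e^{n(h-\gamma)}$ for $n\ge N$. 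Combining the two estimates, $h-\gamma\le\liminf_{n}\frac1n\log N^u_{\mu^\eta_x}(n,\xi,\delta)\le\limsup_{n}\frac1n\log N^u_{\mu^\eta_x}(n,\xi,\delta)\le h+\gamma$; letting $\gamma\downarrow0$ gives the claimed limit for $\mu$-a.e.\ $x$.

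All the dynamical content is packed into the unstable Shannon--McMillan--Breiman theorem and the identity $h^u_\mu(f,\xi)=h^u_\mu(f)$ valid for $\xi\in\mathcal P_{\varepsilon_0}$; once these are granted, what remains is a routine double count, so the main obstacle is essentially bookkeeping: (i) justifying that the $\mu$-a.e.\ convergence in the SMB theorem descends to $\mu^\eta_x$-a.e.\ convergence on almost every fibre, so that Egorov's theorem can be invoked fibrewise; and (ii) arranging the auxiliary constants so that a single $\rho$ can be taken smaller than both $\delta$ and $1-\delta$. The possibility that $\xi$ is only countable (with $H_\mu(\xi|\eta)<\infty$) causes no difficulty, since each $\xi_0^{n-1}$ is still countable and, measures being countably additive, finitely many of its atoms already carry $\mu^\eta_x$-measure at least $1-\delta$, so $N^u_{\mu^\eta_x}(n,\xi,\delta)$ is finite.
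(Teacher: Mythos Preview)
Your proof is correct and follows essentially the same route as the paper: both derive the result from the unstable Shannon--McMillan--Breiman theorem (Theorem~\ref{theorem 2.2}) together with the identity $h_\mu(f,\xi|\eta)=h^u_\mu(f)$ (Theorem~\ref{Theorem 2.1}), transfer the $\mu$-a.e.\ convergence to $\mu^\eta_x$-a.e.\ convergence on almost every fibre (this is Lemma~\ref{lemma3.2}), and finish with a two-sided counting argument. The only cosmetic difference is that you invoke Egorov once to obtain a single set $G$ with a two-sided bound and reuse it for both directions, whereas the paper handles the upper bound directly via the sets $Y_n=\{y:\mu^\eta_x(\xi_0^{n-1}(y))>e^{-n(h+\gamma)}\}$ and reserves Egorov for the lower bound; your choice of an auxiliary $\rho<\min\{\delta,1-\delta\}$ plays the same role as the paper's factor $(1-\delta)/2$. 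One small remark: in this paper $\mathcal P_{\varepsilon_0}$ consists of \emph{finite} partitions, so your closing comment about countable $\xi$ is unnecessary (though harmless).
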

See  Section 2 for the definitions of $\xi_{0}^{n-1}$ and $\mathcal P_{\varepsilon_0}$.

The remainder of this paper is organized as follows. Section 2 gives some preliminaries. Section 3 provides the proof of Theorem \ref{theorem2}. Finally,  we prove Theorem \ref{theorem3} in Section 4.

\section{Preliminaries}
Let $M$ be an $n$-dimensional smooth, connected and compact Riemannian manifold without boundary and $f: M \rightarrow M$ be a $C^1$-diffeomorphism.
From now on we always assume that $f$ is a $C^1$-partially hyperbolic diffeomorphism of $M$, and $\mu$ is an $f$-invariant probability measure.

We say that $\alpha$ is a measurable partition of $M$ if there exists some measurable set $M_0\subset M$ with full measure such that, restricted to $M_0$,
\begin{equation*}
  \alpha=\bigvee_{n=1}^{\infty}\alpha_n
\end{equation*}
for some increasing sequence $\alpha_1\leq\alpha_2\leq\cdots\leq\alpha_n\leq\cdots$ of countable partitions. By $\alpha_i\leq \alpha_{i+1}$ we mean that every element of $\alpha_{i+1}$ is contained in some element of $\alpha_i$ or, equivalently, every element of $\alpha_i$ is a union of elements of $\alpha_{i+1}.$  Then we say that $\alpha_i$ is coarser than $\alpha_{i+1}$ or, equivalently, $\alpha_{i+1}$ is finer than $\alpha_i$.
Represent by  $\bigvee_{n=1}^{\infty}\alpha_n$ the partition whose elements are the non-empty intersections of the form $\cap_{n=1}^{\infty}A_n$ with $A_n\in \alpha_n$ for each $n$. Equivalently, this is the coarser partition such that
\begin{equation*}
  \alpha_i\leq \bigvee_{n=1}^{\infty}\alpha_n  \textrm{ for every } i.
\end{equation*}

It is easy to see that every countable partition is measurable.
 For a measurable partition $\xi$ of $M$ and $x\in M$, denote by $\xi(x)$ the element of $\xi$ containing $x$,
 and set
\begin{equation*}
  \xi_{0}^{n-1}=\xi \bigvee f^{-1}\xi\bigvee \cdots \bigvee f^{-(n-1)}\xi.
\end{equation*}

Let $\varepsilon_0$ be small enough and $\mathcal P_{\varepsilon_0}$ denote the set of finite measurable partitions of $M$ whose elements have diameters smaller than or equal to $\varepsilon_0$, that is, diam$\xi:=\sup\{\textrm{diam} A: A \in \xi\}\leq \varepsilon_0$. For each $\beta\in \mathcal P_{\varepsilon_0}$ we can define a finer partition $\eta$ such that $\eta(x)=\beta(x)\cap W^u_{\textrm{loc}}(x)$ for each $x\in M$, where  $W^u_{\textrm{loc}}(x)$ denotes the local unstable manifold at $x$  whose size is greater than the diameter $\varepsilon_0$ of $\beta$. Note that for each $x\in M$, $ W_{\textrm{loc}}^u(x)\subset W^u(x)$, where $W^u(x)$ is the unstable manifold at $x$, i.e.
\begin{equation*}
  W^u(x)=\{t\in M: \lim\limits_{n\rightarrow\infty}d(f^{-n}x,f^{-n}t)=0\},
\end{equation*}
where $d$ is the distance on  $M$ generated by the  Riemannian metric on $M$.

Note that if $\eta(x)=\beta(x)\cap W_{\textrm{loc}}^u(x)$, $\eta(y)=\beta(y)\cap W_{\textrm{loc}}^u(y)$ and $\eta(x)\cap\eta(y)\neq\emptyset$, then $\eta(x)=\eta(y)$. In fact, choose any $z\in \eta(x)\cap\eta(y) $.  Since $z\in \beta(x)\cap\beta(y) $, we obtain $\beta(x)=\beta(y)$. For any $t\in \eta(x)$, obviously, $t\in \beta(x)$, so $t\in\beta(y)$. And we also have $\lim\limits_{n\rightarrow\infty}d(f^{-n}t, f^{-n}x)=0$.  Moreover, observe that $\lim\limits_{n\rightarrow\infty}d(f^{-n}x, f^{-n}z)=\lim\limits_{n\rightarrow\infty}d(f^{-n}y, f^{-n}z)=0$ and
\begin{equation*}
  d(f^{-n}t, f^{-n}y)\leq d(f^{-n}t, f^{-n}x)+d(f^{-n}x, f^{-n}z)+d(f^{-n}z, f^{-n}y),
\end{equation*}
we have $\lim\limits_{n\rightarrow\infty}d(f^{-n}t, f^{-n}y)=0$, so $t\in W^u(y)$. Noting that  $t\in\beta(y)$ and the size of $W_{\textrm{loc}}^u(y)$ is greater than  $\textrm{diam}(\beta(y))$, we obtain $t\in W_{\textrm{loc}}^u(y)$. Therefore, $t\in \eta(y)=\beta(y)\cap W_{\textrm{loc}}^u(y)$. Then, we have $\eta(x)\subset\eta(y)$. Similarly, $\eta(y)\subset\eta(x)$. Hence, $\eta(x)=\eta(y)$.

Clearly, $\eta$ is a measurable partition satisfying $\beta\leq \eta$. Let $\mathcal P^u_{\varepsilon_0}$ denote the set of partitions $\eta$ obtained by this way.

Denote by $d^u$ the metric induced by the Riemannian structure on the unstable manifold. Given $\varepsilon>0$ and $x\in M$, let $B^u(x,\varepsilon)$ denote the open ball centered at $x$ with radius $\varepsilon$ in the unstable manifold $W^u(x)$ with respect to $d^u$. For  $x\in M$, $y\in W^u(x)$ and $n\in \mathbb N$, let $d^u_n(x,y)=\max\{d^u(f^{i}x, f^{i}y): i=0,1,\cdots, n-1\}$. Given $\varepsilon>0$, $x\in M$ and $n\in\mathbb N$, let $B^u_n(x, \varepsilon)=\{y\in W^u(x): d^u_n(x, y)<\varepsilon$\} denote the $d_n^u$-ball centered at $x$ with radius $\varepsilon$, i.e. an $(n,\varepsilon)$ Bowen ball in $W^u(x)$  centered at $x$.

The following conclusion is used in the proof of the Lemma 4.4 in \cite{[HU]}.
Let $\gamma>0$ be small enough. There exists $C>1$ such that for any $x\in M$,
\begin{equation*}
  d(y,z)\leq d^u(y,z)\leq Cd(y,z)
\end{equation*}
for any $y, z\in \overline{W^u(x, \gamma)}$, where $W^u(x,\gamma)$ is the open ball inside $W^u(x)$ centered at $x$ with radius $\gamma$ with respect to the metric $d^u$ and $\overline{W^u(x, \gamma)}$ is the closure of $W^u(x,\gamma)$.

Then, we can obtain the following two lemmas.

\begin{lemma}\label{lemma ball 1}
For $\gamma>0$ small enough,  $B^u_{n}(x, \gamma)\subset B_{n}(x, \gamma)$ for any $x\in M$.
\end{lemma}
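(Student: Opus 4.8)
The plan is to read off the inclusion directly from the local comparison between the ambient metric $d$ and the leafwise metric $d^u$ recalled just before the statement: for $\gamma>0$ sufficiently small there is a constant $C>1$ such that $d(y,z)\le d^u(y,z)\le Cd(y,z)$ whenever $y,z\in\overline{W^u(w,\gamma)}$ for some $w\in M$. Only the left-hand inequality $d\le d^u$ is needed, and we take $\gamma$ small enough that this is available uniformly in $w$. So the lemma is really just a bookkeeping step; the one thing to check is that the relevant pairs of points actually lie in a set of the form $\overline{W^u(w,\gamma)}$.

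Concretely, I would argue as follows. Let $y\in B^u_n(x,\gamma)$. By definition $y\in W^u(x)$ and $d^u(f^ix,f^iy)<\gamma$ for every $i=0,1,\dots,n-1$. Since the unstable foliation is $f$-invariant, $f^iy\in f^i\big(W^u(x)\big)=W^u(f^ix)$, and the bound $d^u(f^ix,f^iy)<\gamma$ says precisely that $f^iy\in W^u(f^ix,\gamma)$, while $f^ix$ is the center of that ball; hence both $f^ix$ and $f^iy$ lie in $\overline{W^u(f^ix,\gamma)}$. Applying the comparison at the point $f^ix$ therefore gives
\[
d(f^ix,f^iy)\le d^u(f^ix,f^iy)<\gamma\qquad (i=0,1,\dots,n-1).
\]
Taking the maximum over $i=0,1,\dots,n-1$ yields $d_n(x,y)<\gamma$, that is, $y\in B_n(x,\gamma)$. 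As $y\in B^u_n(x,\gamma)$ was arbitrary, this proves $B^u_n(x,\gamma)\subset B_n(x,\gamma)$ for every $x\in M$.

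I do not expect any genuine obstacle here. The only subtlety is the one already flagged: one must verify that the iterates $f^ix$ and $f^iy$ sit inside a single leafwise ball $\overline{W^u(w,\gamma)}$ on which the estimate $d\le d^u$ holds, and this is exactly what the $f$-invariance of $W^u$ combined with the hypothesis $d^u(f^ix,f^iy)<\gamma$ delivers (with $w=f^ix$). Everything else is immediate from the definitions of $d_n$, $d^u_n$, and the two Bowen balls.
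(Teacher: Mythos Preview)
Your proof is correct and essentially identical to the paper's own argument: both use the local comparison $d\le d^u$ on $\overline{W^u(f^ix,\gamma)}$ to turn the leafwise bound $d^u(f^ix,f^iy)<\gamma$ into the ambient bound $d(f^ix,f^iy)<\gamma$ for each $i$. The paper phrases the first step via the decomposition $B^u_n(x,\gamma)=\bigcap_{i=0}^{n-1}f^{-i}B^u(f^ix,\gamma)$, but this is exactly your observation that $f^iy\in W^u(f^ix,\gamma)$.
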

\begin{proof}
Observe  that
\begin{equation*}
B^u_n(x,\gamma)=\cap_{i=0}^{n-1}f^{-i}B^u(f^{i}x, \gamma).
\end{equation*}
Noting that for $\gamma>0$ small enough, for each $x\in M$, $y, z\in \overline{W^u(x, \gamma)}$, we have $ d(y,z)\leq d^u(y,z)$. For any $y\in B^u_n(x,\gamma)$, $f^i y\in B^u(f^i x, \gamma)\subset \overline {W^u(f^i x, \gamma)}(i=0,1,\cdots, n-1)$, so
\begin{equation*}
  d(f^i x, f^i y)\leq d^u(f^i x, f^i y)<\gamma,
\end{equation*}
for $i=0, 1, \cdots, n-1$.
Thus $y\in B_n(x, \gamma)$.
Hence $B^u_{n}(x, \gamma)\subset B_{n}(x, \gamma)$.
\end{proof}

\begin{lemma}\label{lemma ball 2}
Given $\varepsilon_0>0$ small enough and $\eta\in \mathcal P_{\varepsilon_0}^{u}$, there exists $C>1$ such that  for each $x\in M$, $n\geq 1$, $0<\varepsilon<\varepsilon_0$ and $y\in \eta(x)$,
\begin{equation*}
  B_n(y, \frac{\varepsilon}{C})\cap\eta(x)\subset  B^u_n(y, \varepsilon)\cap\eta(x).
\end{equation*}
\end{lemma}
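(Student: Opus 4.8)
The plan is to reduce everything to the quoted comparison $d(a,b)\le d^u(a,b)\le C\,d(a,b)$ valid on each $\overline{W^u(w,\gamma)}$, the only subtle point being \emph{at which center} the comparison gets applied at each iterate. First I would fix a small $\gamma>0$ for which this comparison holds with some constant $C>1$; this $C$ will be the constant claimed in the lemma, and the smallness required of $\varepsilon_0$ will be fixed relative to $\gamma$ below.

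Next I would record a uniform ``no local folding'' fact: by the compactness of $M$ and the continuity of the unstable foliation, there exists $\delta_0\in(0,\gamma)$ such that for every $w\in M$, any $t\in W^u(w)$ with $d(w,t)<\delta_0$ automatically lies in $\overline{W^u(w,\gamma)}$. I then require $\varepsilon_0\le\delta_0$.

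Now fix $x\in M$, $n\ge1$, $0<\varepsilon<\varepsilon_0$, $y\in\eta(x)$, and let $z\in B_n(y,\varepsilon/C)\cap\eta(x)$. Since $z\in\eta(x)\subset W^u_{\textrm{loc}}(x)\subset W^u(x)$ and $y\in W^u(x)$, invariance of the unstable leaves gives $f^iy,f^iz\in W^u(f^ix)=W^u(f^iy)$ for $0\le i\le n-1$. From $z\in B_n(y,\varepsilon/C)$ we get $d(f^iy,f^iz)<\varepsilon/C<\varepsilon_0\le\delta_0$, so the no-folding fact with $w=f^iy$ gives $f^iz\in\overline{W^u(f^iy,\gamma)}$, while trivially $f^iy\in\overline{W^u(f^iy,\gamma)}$. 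Applying the comparison centered at $f^iy$ then gives, for each $0\le i\le n-1$,
\begin{equation*}
 d^u(f^iy,f^iz)\le C\,d(f^iy,f^iz)<C\cdot\frac{\varepsilon}{C}=\varepsilon.
\end{equation*}
Hence $d^u_n(y,z)<\varepsilon$, i.e.\ $z\in B^u_n(y,\varepsilon)$. Together with $z\in\eta(x)$ this yields the inclusion $B_n(y,\varepsilon/C)\cap\eta(x)\subset B^u_n(y,\varepsilon)\cap\eta(x)$.

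The main obstacle is the no-folding fact: the quoted comparison only controls $d^u$ by $d$ for points lying in a common small $d^u$-ball, so one first has to know that two points on one unstable leaf that are $d$-close are automatically $d^u$-close (within such a ball). Re-centering at $f^iy$ rather than at $f^ix$ is essential, since forward images of $\eta(x)$ spread out along $W^u(f^ix)$ and $f^iy$ need not stay near $f^ix$ in either metric; only the distance between $f^iy$ and $f^iz$ is under control, which is exactly what the recentered comparison exploits.
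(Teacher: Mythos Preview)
Your argument is correct, but the route differs from the paper's. You invoke an extra geometric fact (``no local folding'': two points on one unstable leaf that are $d$-close below $\delta_0$ are automatically in a common $d^u$-ball of radius $\gamma$) and then apply the comparison $d^u\le C\,d$ directly at each iterate, recentering at $f^iy$. The paper avoids introducing this fact and instead runs an induction: from $d^u(f^iy,f^iz)<\varepsilon$ it uses a uniform Lipschitz bound $d^u(fa,fb)<\lambda\,d^u(a,b)$ along leaves (valid since $f$ is $C^1$) to get $d^u(f^{i+1}y,f^{i+1}z)<\lambda\varepsilon<\lambda\varepsilon_0$, and, choosing $\varepsilon_0$ so that $\lambda\varepsilon_0<\gamma$, this places $f^{i+1}y,f^{i+1}z$ in a common $\overline{W^u(\cdot,\gamma)}$ where the comparison gives $d^u(f^{i+1}y,f^{i+1}z)\le C\,d(f^{i+1}y,f^{i+1}z)<\varepsilon$; the base case uses that $y,z\in\eta(x)\subset W^u_{\mathrm{loc}}(x)$. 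Your approach is shorter and sidesteps the bootstrap, at the cost of importing the no-folding lemma (a standard consequence of the uniform local embeddedness and continuity of the unstable plaques, but not stated in the paper); the paper's approach stays entirely within the two facts already quoted (metric comparison and $C^1$-Lipschitz expansion) and makes explicit how the smallness condition $\lambda\varepsilon_0<\gamma$ enters.
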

\begin{proof}

Since $f: M\rightarrow M$ is a $C^1$-diffeomorphism, for any $x\in M$, there exists $\lambda>1$ such that
\begin{equation*}
  d^u(fy, fz)<\lambda d^u(y,z),
\end{equation*}
for any $y, z\in W^u(x)$.
Note that for $\gamma>0$ small enough, there exists $C>1$ such that for any $x\in M$,
\begin{equation*}
  d(y,z)\leq d^u(y,z)\leq Cd(y,z)
\end{equation*}
for any $y, z\in \overline{W^u(x, \gamma)}$.
Therefore, for any $z\in  B_n(y, \frac{\varepsilon}{C})\cap\eta(x)$, we have
\begin{equation*}
  d^u(y, z)\leq C d(y,z)\leq\varepsilon_0.
\end{equation*}
Since $y,z\in W^u(x)$,  $d^u(y, z)\leq\varepsilon_0$ and $\varepsilon_0$ is small enough,
\begin{equation*}
  d^u(fy, fz)<\lambda d^u(y,z)<\lambda \varepsilon_0.
\end{equation*}
Since  $\varepsilon_0$ is small enough, $\lambda\varepsilon_0$ can be  smaller than  $\gamma$. And note that $f(y), f(z)\in W^u(fx)$, we have
\begin{equation*}
  d^u(fy,fz)\leq C d(fy,fz)\leq C\cdot\frac{\varepsilon}{C}=\varepsilon.
\end{equation*}
By  induction,
 we obtain
\begin{equation*}
  d^u(f^i y, f^i z)\leq Cd(f^i y,f^i z)<\varepsilon,
\end{equation*}
for $ i=0, 1, \cdots, n-1$.
Therefore $z\in B^u_n(y, \varepsilon)\cap\eta(x)$. Thus
\begin{equation*}
  B_n(y, \frac{\varepsilon}{C})\cap\eta(x)\subset  B^u_n(y, \varepsilon)\cap\eta(x).
\end{equation*}
\end{proof}

Recall that for a measurable partition $\eta$ of $M$ and a probability measure $\nu$ on $M$, the canonical system of conditional measures for $\nu$ and $\eta$ is a family of probability measures$\{\nu^{\eta}_x: x\in M\}$ with $\nu^{\eta}_x(\eta(x))=1$, such that for every measurable set $B\subset M$, $x\mapsto \nu^{\eta}_x(B)$ is measurable and
\begin{equation*}
  \nu(B)=\int_{M}\nu^{\eta}_x(B)\textrm{d}\nu(x).
\end{equation*}

The following notions are standard.

The information function of $\xi\in\mathcal P_{\varepsilon_0}$ with respect to $f$-invariant probability measure $\mu$ is defined as
\begin{equation*}
  I_{\mu}(\xi)(x):=-\log\mu(\xi(x)),
\end{equation*}
and the entropy of partition $\xi$ as
\begin{equation*}
  H_{\mu}(\xi):=\int_{M}I_{\mu}(\xi)(x)\textrm{d}\mu(x)=-\int_{M}\log\mu(\xi(x))\textrm{d}\mu(x).
\end{equation*}

The conditional information function of $\xi\in\mathcal P_{\varepsilon_{0}}$ given $\eta\in \mathcal P^u_{\varepsilon_0}$ with respect to $\mu$ is defined as
\begin{equation*}
  I_{\mu}(\xi|\eta)(x)=-\log\mu_{x}^{\eta}(\xi(x)).
\end{equation*}
Then the conditional entropy of $\xi\in\mathcal P_{\varepsilon_{0}}$ given $\eta\in \mathcal P^u_{\varepsilon_0}$  with respect to $\mu$ is defined as
\begin{equation*}
  H_{\mu}(\xi|\eta):=\int_{M}I_{\mu}(\xi|\eta)(x)\textrm{d}\mu(x)=-\int_M\log\mu_x^{\eta}(\xi(x))\textrm{d}\mu(x).
\end{equation*}
\begin{definition}\cite{[HU]}
For an $f$-invariant probability measure $\mu$, the conditional entropy of $f$ with respect to $\xi\in\mathcal P_{\varepsilon_0}$ given $\eta\in \mathcal P_{\varepsilon_{0}}^u$ is defined as
\begin{equation*}
 h_{\mu}(f, \xi|\eta)=\limsup\limits_{n\rightarrow \infty}\frac{1}{n}H_{\mu}(\xi^{n-1}_{0}|\eta).
\end{equation*}
The conditional entropy of $f$ given $\eta\in \mathcal P_{\varepsilon_0}^u$ with respect to $\mu$ is defined as
\begin{equation*}
  h_{\mu}(f|\eta)=\sup_{\xi\in\mathcal P_{\varepsilon_0}}h_{\mu}(f,\xi|\eta).
\end{equation*}
And the unstable metric entropy of $f$ with respect to $\mu$ is defined as
\begin{equation*}
  h_{\mu}^u(f)=\sup_{\eta\in\mathcal P_{\varepsilon_0}^u}h_{\mu}(f|\eta).
\end{equation*}
\end{definition}

The following theorems will be used in  proving the main results.
\begin{theorem}\cite{[HU]}\label{Theorem 2.1}
Suppose $\mu$ is an $f$-invariant probability measure. Then for any $\xi\in\mathcal P_{\varepsilon_0}$ and $\eta\in \mathcal P^u_{\varepsilon_0}$,
\begin{equation*}
  h_{\mu}^u(f)=h_{\mu}(f|\eta)=h_{\mu}(f, \xi|\eta).
\end{equation*}
\end{theorem}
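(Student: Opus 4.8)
The plan is to prove the two independence statements that together give the theorem: that $h_\mu(f,\xi|\eta)$ does not depend on the choice of $\eta\in\mathcal P^u_{\varepsilon_0}$, and that it does not depend on the choice of $\xi\in\mathcal P_{\varepsilon_0}$. The first, combined with $h^u_\mu(f)=\sup_{\eta\in\mathcal P^u_{\varepsilon_0}}h_\mu(f|\eta)$, forces $h^u_\mu(f)=h_\mu(f|\eta)$; the second gives $h_\mu(f|\eta)=\sup_{\xi\in\mathcal P_{\varepsilon_0}}h_\mu(f,\xi|\eta)=h_\mu(f,\xi|\eta)$.

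For the independence of $\eta$, I would use that if $\eta,\eta'\in\mathcal P^u_{\varepsilon_0}$ are built from the finite partitions $\beta,\beta'$ respectively, then $(\eta\vee\eta')(x)=\beta(x)\cap\beta'(x)\cap W^u_{\textrm{loc}}(x)$, so that $\eta\vee\eta'$ refines $\eta$ only by the finite partition $\beta'$. Hence $H_\mu(\xi_0^{n-1}|\eta\vee\eta')\le H_\mu(\xi_0^{n-1}|\eta)\le H_\mu(\xi_0^{n-1}|\eta\vee\eta')+H_\mu(\eta'|\eta)$ with $H_\mu(\eta'|\eta)\le H_\mu(\beta')<\infty$; dividing by $n$ and letting $n\to\infty$ gives $h_\mu(f,\xi|\eta)=h_\mu(f,\xi|\eta\vee\eta')=h_\mu(f,\xi|\eta')$, and then taking $\sup_\xi$ yields $h_\mu(f|\eta)=h_\mu(f|\eta')=h^u_\mu(f)$.

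The substantive point is the independence of $\xi$. By symmetry it suffices to show $h_\mu(f,\xi\vee\xi'|\eta)\le h_\mu(f,\xi|\eta)$, i.e. $\tfrac1n H_\mu((\xi')_0^{n-1}\,|\,\xi_0^{n-1}\vee\eta)\to0$. The geometric ingredient is that, taking $\varepsilon_0$ small enough relative to the uniform expansion rate $\lambda_u>1$ along $W^u$, the comparison constant $C$ of Lemma \ref{lemma ball 2}, and the size of local unstable manifolds, every $\xi\in\mathcal P_{\varepsilon_0}$ satisfies: $\bigvee_{i\ge0}f^{-i}\xi\vee\eta$ is the point partition $\mu$-a.e. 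Indeed, if $y\in\eta(x)$ and $f^iy\in\xi(f^ix)$ for all $i\ge0$, then $d(f^iy,f^ix)\le\varepsilon_0$ for $i\ge0$, and since $y\in W^u_{\textrm{loc}}(x)$ also forces $d(f^jy,f^jx)\le\varepsilon_0$ for $j\le0$, the points $f^iy$ and $f^ix$ stay on a common local unstable manifold for every $i\ge0$; hence $d^u(f^iy,f^ix)\le Cd(f^iy,f^ix)\le C\varepsilon_0$ while $d^u(f^iy,f^ix)\ge\lambda_u^i\,d^u(y,x)$, and letting $i\to\infty$ forces $y=x$. I would then pass to an increasing measurable partition $\alpha$ subordinate to $W^u$ with $\alpha\ge f\alpha$, $H_\mu(\alpha|f\alpha)<\infty$ and $\bigvee_{n\ge0}f^{-n}\alpha$ the point partition, identify $h_\mu(f,\xi|\eta)$ with the Ledrappier–Young conditional entropy $H_\mu(\alpha|f\alpha)$ via a Rokhlin-type computation in which the generating property above is used, and invoke the classical fact that $H_\mu(\alpha|f\alpha)$ does not depend on the choice of such an $\alpha$.

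The main obstacle is precisely this last step. Running the chain-rule estimate directly on $\xi_0^{n-1}\vee\eta$ produces the bound $\tfrac1n H_\mu((\xi')_0^{n-1}|\xi_0^{n-1}\vee\eta)\le\tfrac1n\sum_{k=0}^{n-1}H_\mu\big(\xi'\,\big|\,\bigvee_{j=0}^{n-1-k}f^{-j}\xi\vee f^k\eta\big)$, in which the iterated partitions $f^k\eta$ appear; since their atoms grow without bound with $k$, the conditioning partition $\bigvee_{j\ge0}f^{-j}\xi\vee f^k\eta$ need not be the point partition for $k\ge1$, and the terms with $k$ of order $n$ are not controlled, so the bare chain rule does not close. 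Replacing $\eta$ by an increasing saturation $\alpha$ cures this because $\alpha$ is genuinely compatible with the dynamics: $H_\mu(\alpha|f^n\alpha)=\sum_{j=1}^{n}H_\mu(f^{j-1}\alpha|f^j\alpha)=nH_\mu(\alpha|f\alpha)$, so the dependence on $n$ is exactly linear. Establishing $h_\mu(f,\xi|\eta)=H_\mu(\alpha|f\alpha)$ together with the invariance of the latter under the choice of $\alpha$ — the unstable analogue of the Kolmogorov–Sinai theorem — is the technical heart of the argument.
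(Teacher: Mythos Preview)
The paper does not prove this theorem; it is quoted from \cite{[HU]} (note the citation attached to the theorem header) and invoked as a black box in the proofs of the two main results. There is therefore no proof in the present paper to compare your proposal against.

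For what it is worth, your outline is broadly faithful to the argument actually carried out in \cite{[HU]}. The independence of $\eta$ is handled there along the lines you sketch, via a bounded error term coming from the fact that any two partitions in $\mathcal P^u_{\varepsilon_0}$ differ from their common refinement only by a finite partition. The independence of $\xi$ is, as you correctly identify, the substantive step, and in \cite{[HU]} it is established by relating $h_\mu(f,\xi|\eta)$ to the Ledrappier--Young quantity $H_\mu(\alpha|f\alpha)$ for an increasing measurable partition $\alpha$ subordinate to $W^u$, using precisely the generating property of $\bigvee_{i\ge0}f^{-i}\xi\vee\eta$ along unstable leaves that you describe. Your final paragraph correctly diagnoses why the naive chain-rule estimate on $\xi_0^{n-1}\vee\eta$ fails (the appearance of the uncontrolled forward iterates $f^k\eta$) and why passing to an increasing $\alpha$ repairs it; filling in the identification $h_\mu(f,\xi|\eta)=H_\mu(\alpha|f\alpha)$ is the content of several lemmas in \cite{[HU]} and \cite{[Young]}, but the strategy you describe is the right one.
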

\begin{theorem}\label{theorem 2.2}\cite{[HU]}
Suppose $\mu$ is an ergodic measure of $f$. Let $\eta \in \mathcal P^u_{\varepsilon_0}$ be given. Then for any partition $\xi\in \mathcal P_{\varepsilon_0}$ with $H_{\mu}(\xi|\eta)<\infty$, we have
\begin{equation*}
 \lim\limits_{n\rightarrow \infty}\frac{I_{\mu}(\xi_{0}^{n-1}|\eta)(x)}{n}= h_{\mu}(f,\xi|\eta),
\end{equation*}
for $\mu$-a.e. $x\in M$.
\end{theorem}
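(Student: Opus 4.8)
\textbf{Proof proposal for Theorem \ref{theorem3}.}

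The plan is to deduce the result from the conditional Shannon--McMillan--Breiman theorem (Theorem \ref{theorem 2.2}) by a counting argument that mimics Katok's original proof, but carried out fibrewise on the conditional measures $\mu_x^\eta$. By Theorem \ref{theorem 2.2} and Theorem \ref{Theorem 2.1}, $\frac1n I_\mu(\xi_0^{n-1}\mid\eta)(y)\to h_\mu(f,\xi\mid\eta)=h_\mu^u(f)=:h$ for $\mu$-a.e. $y$; equivalently, writing $\xi_0^{n-1}(y)$ for the atom of $\xi_0^{n-1}$ through $y$, one has $-\frac1n\log\mu_y^\eta(\xi_0^{n-1}(y))\to h$. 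Note that for $z\in\eta(y)$ the atom $\xi_0^{n-1}(z)\cap\eta(y)$ coincides with $\xi_0^{n-1}(z)$ as a subset of $\eta(y)$ up to $\mu_y^\eta$-null sets, because $\eta\geq f\eta$ forces $\xi_0^{n-1}\vee\eta$ to refine $\xi_0^{n-1}$ only within a single $\eta$-atom; thus the convergence above is a statement about the $\mu_x^\eta$-measures of the elements of $\xi_0^{n-1}$ restricted to the fibre. A routine application of the measure decomposition $\mu=\int\mu_x^\eta\,d\mu(x)$ and Egorov's theorem then gives, for any $\tau>0$, a set $G\subset M$ with $\mu(G)>1-\tau$ and an integer $N$ so that for all $x\in G$, all $n\geq N$, the $\mu_x^\eta$-mass of the ``good'' atoms --- those $A\in\xi_0^{n-1}$ with $A\cap\eta(x)\neq\emptyset$ and $e^{-n(h+\tau)}\leq\mu_x^\eta(A)\leq e^{-n(h-\tau)}$ --- exceeds $1-\delta/2$, and moreover (by disintegration along $\eta$) $\mu_x^\eta(G\cap\eta(x))>1-\delta/2$ for $x$ in a further subset of nearly full $\mu$-measure.

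The upper bound $\limsup_n\frac1n\log N_{\mu_x^\eta}^u(n,\xi,\delta)\leq h$ is the easy direction. Covering the good atoms one needs at most $e^{n(h+\tau)}$ of them to capture $\mu_x^\eta$-mass $1-\delta/2>1-\delta$, since each has mass $\geq e^{-n(h+\tau)}$ and their total mass is at most $1$; hence $N_{\mu_x^\eta}^u(n,\xi,\delta)\leq e^{n(h+\tau)}$ for $n\geq N$ and $x$ in the good set. Letting $\tau\to0$ along a sequence and using Borel--Cantelli to remove the dependence of the exceptional set on $\tau$ gives the bound for $\mu$-a.e. $x$.

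For the lower bound $\liminf_n\frac1n\log N_{\mu_x^\eta}^u(n,\xi,\delta)\geq h$, suppose a family of at most $K$ atoms of $\xi_0^{n-1}$ has $\mu_x^\eta$-mass $\geq1-\delta$. Intersect this family with the good atoms: the good atoms restricted to $\eta(x)$ carry mass $>1-\delta/2$ on the good fibre, so the selected family must include good atoms of total $\mu_x^\eta$-mass at least $1-\delta-\delta/2$; if $\delta<1/2$ this is positive and, since each good atom has $\mu_x^\eta$-mass $\leq e^{-n(h-\tau)}$, we get $K\geq (1-3\delta/2)\,e^{n(h-\tau)}$, whence $\frac1n\log N_{\mu_x^\eta}^u(n,\xi,\delta)\geq h-\tau-o(1)$. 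The case $\delta\geq1/2$ is handled by first shrinking: $N_{\mu_x^\eta}^u(n,\xi,\delta)\geq N_{\mu_x^\eta}^u(n,\xi,\delta')$ is false in that direction, so instead one runs the Egorov step with $\delta$ replaced by any $\delta''<\min(\delta,1/2)$ and uses $N_{\mu_x^\eta}^u(n,\xi,\delta)\le N_{\mu_x^\eta}^u(n,\xi,\delta'')$ only for the upper bound, while for the lower bound one notes the good atoms carry mass $>1-\delta''/2$ and repeats the computation with the constant $1-\delta-\delta''/2$, which one can force positive by choosing $\delta''$ small --- wait, this needs $1-\delta>0$, which always holds, but we also need the good-atom contribution positive, i.e. $1-\delta-\delta''/2>0$, achievable since $\delta<1$. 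Taking $\tau\to0$ and cleaning up the exceptional sets via Borel--Cantelli completes the proof; both one-sided limits equal $h=h_\mu^u(f)$, so the full limit exists and equals $h_\mu^u(f)$.

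\textbf{Main obstacle.} The principal technical point is the interplay between the partition $\xi_0^{n-1}$ of $M$ and its restriction to a single $\eta$-atom: one must verify carefully that $I_\mu(\xi_0^{n-1}\mid\eta)(x)=-\log\mu_x^\eta(\xi_0^{n-1}(x)\cap\eta(x))$ genuinely counts the mass of the \emph{fibre} atom (this is where $\eta\in\mathcal P_{\varepsilon_0}^u$ with $\eta\geq f\eta$, hence $\xi_0^{n-1}\vee\eta$ versus $\xi_0^{n-1}$, matters), and that the measurability and ``almost full $\mu_x^\eta$-mass of $G\cap\eta(x)$'' claims following from the disintegration are legitimate. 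Once that bookkeeping is in place, the counting argument is a direct transcription of Katok's.
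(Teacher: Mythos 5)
There is a fundamental mismatch here: the statement you were asked to prove is Theorem \ref{theorem 2.2}, the conditional Shannon--McMillan--Breiman theorem asserting that $\frac{1}{n}I_{\mu}(\xi_{0}^{n-1}|\eta)(x)\to h_{\mu}(f,\xi|\eta)$ for $\mu$-a.e.\ $x$. Your proposal instead proves Theorem \ref{theorem3} (the counting statement about $N^{u}_{\mu_{x}^{\eta}}(n,\xi,\delta)$), and it does so by \emph{assuming} Theorem \ref{theorem 2.2} as its main input (``The plan is to deduce the result from the conditional Shannon--McMillan--Breiman theorem\dots''). As an argument for the stated theorem this is circular: nothing in your Egorov-plus-counting scheme establishes the almost-everywhere convergence of the conditional information function, which is precisely what has to be shown. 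In the paper itself Theorem \ref{theorem 2.2} carries no proof --- it is imported from Hu--Hua--Wu \cite{[HU]}, where the argument runs through the chain-rule decomposition of $I_{\mu}(\xi_{0}^{n-1}|\eta)$ into a Birkhoff-type sum of conditional information functions, a conditional Chung--Neveu/Breiman maximal inequality to get an integrable dominating function, and martingale convergence for the increasing $\sigma$-algebras generated by $\bigvee_{i=1}^{k}f^{-i}\xi\vee\eta$. None of that machinery appears in your write-up, so the actual content of the theorem is untouched.

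Two secondary remarks. First, even judged as a proof of Theorem \ref{theorem3}, your argument is essentially the paper's own Section 4 proof (Lemma \ref{lemma3.2} plus Egorov plus atom counting), so it adds nothing new there. Second, your repeated appeal to ``$\eta\geq f\eta$'' is not justified in this paper's setup: here $\eta\in\mathcal P^{u}_{\varepsilon_0}$ is obtained by intersecting a finite partition $\beta$ with local unstable leaves and is not assumed increasing; the increasing-partition formulation belongs to the Ledrappier--Young framework, and the identification of $\xi_{0}^{n-1}$-atoms with their traces on a single $\eta$-fibre must be argued from the definition of the disintegration $\mu=\int\mu_{x}^{\eta}\,\textrm{d}\mu(x)$ rather than from monotonicity of $\eta$.
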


The result in Theorem \ref{theorem 2.2} is a version of Shannon-McMillan-Breiman theorem for the unstable metric entropy.


\section{Proof of Theorem \ref{theorem2}}
This section gives the proof of Theorem \ref{theorem2}. \begin{lemma}\label{lemma3.2}
Suppose $\mu$ is an ergodic measure of $f$. Let $\eta \in \mathcal P^u_{\varepsilon_0}$ be given, and $\xi\in \mathcal P_{\varepsilon_0}$ with $H_{\mu}(\xi|\eta)<\infty$.
 Then  there exists a set $M_1\subset M$ with $\mu(M_1)=1$, and for each $x\in M_1$, there exists $G_x\subset \eta(x)$ with $\mu_{x}^{\eta}(G_x)=1$ such that
\begin{equation*}
  \lim\limits_{n\rightarrow \infty}\frac{-\log\mu_{x}^{\eta}(\xi_{0}^{n-1}(y))}{n}= h_{\mu}(f, \xi|\eta)
\end{equation*}
for each $y\in G_x$, where $\mu=\int \mu_{x}^{\eta}\textrm{d}\mu(x)$ is the measure disintegration of $\mu$ over $\eta$.
\end{lemma}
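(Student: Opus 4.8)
The plan is to upgrade the conclusion of the unstable Shannon--McMillan--Breiman theorem (Theorem \ref{theorem 2.2}), which is a statement valid for $\mu$-almost every point of $M$, into one valid along $\mu$-almost every fibre $\eta(x)$ with respect to the conditional measure $\mu_x^\eta$. Essentially everything reduces to two facts about canonical systems of conditional measures: the disintegration identity $\mu(B)=\int_M\mu_x^\eta(B)\,\textrm{d}\mu(x)$ for measurable $B$, and the almost everywhere consistency of the conditional measures along the atoms of $\eta$.

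First I would record the full-measure set on which Theorem \ref{theorem 2.2} holds. Since $H_\mu(\xi|\eta)<\infty$, Theorem \ref{theorem 2.2} provides a set $A\subset M$ with $\mu(A)=1$ such that for every $y\in A$,
\[
\lim_{n\to\infty}\frac{-\log\mu_y^\eta(\xi_0^{n-1}(y))}{n}=\lim_{n\to\infty}\frac{I_\mu(\xi_0^{n-1}|\eta)(y)}{n}=h_\mu(f,\xi|\eta).
\]
Applying the disintegration identity to $A$ gives $\int_M\mu_x^\eta(A)\,\textrm{d}\mu(x)=\mu(A)=1$, and since $\mu_x^\eta(A)\le1$ this forces $\mu_x^\eta(A)=1$ for all $x$ in some full-measure set $M_A$.

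Next I would use that the conditional measures are consistent along atoms of $\eta$: there is a full-measure set $M_0\subset M$ such that for every $x\in M_0$ one has $\mu_x^\eta(\{y\in\eta(x):\mu_y^\eta=\mu_x^\eta\})=1$. (This is a standard consequence of the essential uniqueness of the disintegration together with the $\eta$-measurability of $x\mapsto\mu_x^\eta$; recall also that $\mu_x^\eta(\eta(x))=1$ and that $\eta(y)=\eta(x)$ whenever $y\in\eta(x)$.) Setting $M_1=M_A\cap M_0$, which has full $\mu$-measure, and for $x\in M_1$ putting
\[
G_x=\eta(x)\cap A\cap\{y:\mu_y^\eta=\mu_x^\eta\},
\]
each of the three intersected sets has full $\mu_x^\eta$-measure, so $\mu_x^\eta(G_x)=1$. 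For $y\in G_x$, membership in $A$ gives $\frac{-\log\mu_y^\eta(\xi_0^{n-1}(y))}{n}\to h_\mu(f,\xi|\eta)$, and since $\mu_y^\eta=\mu_x^\eta$ we may replace $\mu_y^\eta(\xi_0^{n-1}(y))$ by $\mu_x^\eta(\xi_0^{n-1}(y))$, which is exactly the asserted limit.

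The only genuinely delicate point is the almost everywhere consistency $\mu_y^\eta=\mu_x^\eta$ for $y$ ranging over the atom $\eta(x)$. It is precisely this fact, combined with the integral identity $\mu(A)=\int_M\mu_x^\eta(A)\,\textrm{d}\mu(x)$, that lets one pass from the $\mu$-a.e.\ statement of Theorem \ref{theorem 2.2} to a fibrewise a.e.\ statement; a naive attempt to restrict Theorem \ref{theorem 2.2} directly to a single fibre fails, since each atom $\eta(x)$ is itself a $\mu$-null set. Once the lemma is in hand, the common value $h_\mu(f,\xi|\eta)$ may of course be identified with $h_\mu^u(f)$ via Theorem \ref{Theorem 2.1}.
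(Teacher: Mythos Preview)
Your proposal is correct and follows essentially the same route as the paper's proof: invoke Theorem~\ref{theorem 2.2} to get a $\mu$-full measure set on which the pointwise SMB limit holds, push that set down to the fibres via the disintegration identity, and then use the consistency $\mu_y^\eta=\mu_x^\eta$ for $y\in\eta(x)$ to rewrite the limit in terms of $\mu_x^\eta$. If anything, you are more explicit than the paper about isolating the two measure-theoretic ingredients (the full-measure set $M_A$ from disintegration and the consistency set $M_0$), whereas the paper compresses these into a single sentence.
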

\begin{proof}
By Theorem \ref{theorem 2.2}, there exists a set $M_1\subset M$ with $\mu(M_1)=1$, such that
\begin{equation*}
  \lim\limits_{n\rightarrow \infty}\frac{-\log\mu_{x}^{\eta}(\xi_{0}^{n-1}(x))}{n}= h_{\mu}(f, \xi|\eta)
\end{equation*}
for each $x\in M_1$. Then for each $x\in M_1$, we can find a set $G_x\subset \eta(x)$ with $\mu^{\eta}_x(G_x)=1$ such that
\begin{equation*}
  \lim\limits_{n\rightarrow \infty}\frac{-\log\mu_{y}^{\eta}(\xi_{0}^{n-1}(y))}{n}= h_{\mu}(f, \xi|\eta)
\end{equation*}
for each $y\in G_x$. Note that for each $y\in G_x$, $\mu_x^{\eta}=\mu_y^{\eta}$. Therefore, for each $x\in M_1$ and $y\in G_x$, we have
\begin{equation*}
  \lim\limits_{n\rightarrow \infty}\frac{-\log\mu_{x}^{\eta}(\xi_{0}^{n-1}(y))}{n}= h_{\mu}(f, \xi|\eta).
\end{equation*}
\end{proof}

Now, we are going to prove Theorem \ref{theorem2}.

\begin{proof}
(1) Firstly, we are going to show that for every $0<\delta<1$, we have
\begin{equation*}
\lim\limits_{\varepsilon \rightarrow 0}\limsup\limits_{n\rightarrow \infty}\frac{\log N^{u}_{\mu_{x}^{\eta}}(n, \varepsilon, \delta)}{n}\leq h^u_{\mu}(f)
\end{equation*}
for   $\mu$-a.e. $x\in M$.

For $0<\varepsilon<\varepsilon_0$, let us choose a  finite  partition $\xi\in \mathcal P_{\varepsilon_0}$  with
 diam$(\xi)<\varepsilon/C$ and $H_{\mu}(\xi|\eta)<\infty$.
Since $\varepsilon_0$ is small enough, by Lemma \ref{lemma ball 2},  for each $x\in M$, $n\geq 1$ and $y\in \eta(x)$, we have
 \begin{equation*}
   \xi_0^{n-1}(y)\cap \eta(x)\subset B_n(y, \frac{ \varepsilon}{C})\cap  \eta(x)\subset B^u_{n}(y, \varepsilon)\cap  \eta(x).
 \end{equation*}
   Observe that $\mu$ is ergodic. According to Lemma \ref{lemma3.2}, there exists a subset $M_1\subset M$ with $\mu(M_1)=1$ such that for any $x\in M_1$, there exists a set $ G_{x}\subset\eta(x)$ with $\mu_{x}^{\eta}( G_{x})=1$ such that for any $y\in G_{x}$,
 \begin{equation*}
   \lim\limits_{n\rightarrow \infty}\frac{-\log\mu_{x}^{\eta}(\xi_0^{n-1}(y))}{n}=h_{\mu}(f, \xi|\eta).
 \end{equation*}
 Fix $x\in M_1$. For $n\in \mathbb N$ and $\gamma>0$, set
 \begin{equation*}
   Y_{n} = \{y\in  G_{x}: \mu_{x}^{\eta}(\xi_{0}^{n-1}(y))>\exp(-(h_{\mu}(f, \xi|\eta)+\gamma)n)\}\subset  \bigcup_{V\in\mathcal J_n}V\cap \eta(x),
 \end{equation*}
where $\mathcal J_n=\{V\in \xi_0^{n-1}: \mu_{x}^{\eta}(V)>\exp(-(h_{\mu}(f, \xi|\eta)+\gamma)n)\}.$
Then for each $\gamma >0$, $ \lim\limits_{n\rightarrow \infty}\mu_{x}^{\eta}(Y_{n})=1.$
Thus, for sufficiently large $n\in \mathbb N$, we have $\mu_{x}^{\eta}(Y_{n})>1-\delta$.  Since
\begin{equation*}
\#\mathcal J_n=\# \{V\in \xi_0^{n-1}: \mu_{x}^{\eta}(V)>\exp(-(h_{\mu}(f, \xi|\eta)+\gamma)n)\}\leq \exp((h_{\mu}(f, \xi|\eta)+\gamma)n),
\end{equation*}
the set $Y_{n}$ contains at most $\exp((h_{\mu}(f, \xi|\eta)+\gamma)n)$ elements  of  $\xi_{0}^{n-1}\cap \eta(x)$, where $\xi_{0}^{n-1}\cap \eta(x)=\{V\cap \eta(x): V\in\xi_{0}^{n-1}\}$. Noting that for any $y \in\eta(x)$, $ \xi_0^{n-1}(y)\cap  \eta(x)\subset B^u_{n}(y, \varepsilon)\cap  \eta(x)$, so $Y_n$ can be covered by $ d_n^u$-balls with radius $\varepsilon$ of  the same number. So
\begin{equation*}
 {N}^u_{\mu_{x}^{\eta}}(n, \varepsilon, \delta)\leq \exp((h_{\mu}(f, \xi|\eta)+\gamma)n).
\end{equation*}
Then for any $\gamma >0$,
\begin{equation*}
  \lim\limits_{\varepsilon \rightarrow 0}\limsup\limits_{n\rightarrow \infty}\frac{\log {N}^u_{\mu_{x}^{\eta}}(n, \varepsilon, \delta)}{n}\leq h_{\mu}(f, \xi|\eta)+\gamma.
\end{equation*}
Since $\gamma$ can be taken arbitrarily small and by Theorem \ref{Theorem 2.1} $h_{\mu}(f, \xi|\eta)=h^u_{\mu}(f)$, we obtain
\begin{equation*}
  \lim\limits_{\varepsilon \rightarrow 0}\limsup\limits_{n\rightarrow \infty}\frac{\log {N}^u_{\mu_{x}^{\eta}}(n, \varepsilon, \delta)}{n}\leq h^u_{\mu}(f)
\end{equation*}
for every $x\in M_1$. Noting that $\mu(M_1)=1$, we have
\begin{equation*}
  \lim\limits_{\varepsilon \rightarrow 0}\limsup\limits_{n\rightarrow \infty}\frac{\log {N}^u_{\mu_{x}^{\eta}}(n, \varepsilon, \delta)}{n}\leq h^u_{\mu}(f)
\end{equation*}
 for $\mu$-a.e. $x\in M$.

(2) Secondly, we will turn to prove the second part of the theorem: for every $0<\delta<1$, we have
\begin{equation*}
\lim\limits_{\varepsilon \rightarrow 0}\liminf\limits_{n\rightarrow \infty}\frac{\log {N}^u_{\mu_{x}^{\eta}}(n, \varepsilon, \delta)}{n}\geq h^u_{\mu}(f)
\end{equation*}
for $\mu$-a.e. $x\in M$.

(i) Let $0<\delta<1$ be given. Let $\varepsilon>0$, without loss of generality, we require additionally $\varepsilon^{\frac{1}{4}}<\frac{1-\delta}{4}$. Let us choose a partition $\xi \in \mathcal P_{\varepsilon_0}$ with $\mu(\partial \xi)=0$ and $H_{\mu}(\xi|\eta)<\infty$, where $\partial \xi$ denotes  union of the boundaries $\partial B$ of all elements $B \in \xi$.

For $ \theta >0$, let
\begin{equation*}
U_{\theta}(\xi)=\{x \in M: \textrm{ the ball } B(x, \theta) \textrm{ is not containded in } \xi(x) \},
\end{equation*}
where $\xi(x)$ denotes the element of the partition $\xi$ containing $x$. Since $\bigcap_{\theta >0}U_{\theta}(\xi)=\partial{\xi}$, we have
\begin{equation*}
\mu\left(U_{\theta}(\xi)\right)\rightarrow 0 , \textrm{ as }  \theta \rightarrow 0.
\end{equation*}
 Therefore, there exists $0<\gamma<\varepsilon$  such that $\mu(U_{\theta}(\xi))\leq \varepsilon$ for any $0 < \theta \leq \gamma$.
Using  Birkhoff Ergodic Theorem,
 for $\mu$-a.e. $y\in M$ there exists $N_1(y)>0$ such that for any $k\geq N_1(y)$,
 \begin{equation*}
   \frac{1}{k}\sum_{i=0}^{k-1}\chi_{U_{\gamma}(\xi)}(f^i(y))\leq { \varepsilon},
 \end{equation*}
 where $\chi_{U_{\gamma}(\xi)}$ is the characteristic function of the set $U_{\gamma}(\xi)$.
  For $l\in \mathbb N^+$, we define
  \begin{equation*}
    D_l=\left\{y\in M: \frac{1}{k}\sum_{i=0}^{k-1}\chi_{U_{\gamma}(\xi)}(f^i(y))\leq {\varepsilon} \textrm{ for any } k\geq l\right \}.
  \end{equation*}
Clearly, the sets $D_l$ are nested and exhaust $M$ up to a set of $\mu$-measure zero. Therefore, there exists $l_0>1$ such that $\mu(D_l)\geq 1-2\sqrt{\varepsilon}$ for any $l\geq l_0$.

 Define $M_l=\{x\in M: \mu_{x}^{\eta}(D_l)\geq 1-2\varepsilon^{\frac{1}{4}}\}$, then $M_l^C=\{x\in M: \mu_{x}^{\eta}(D_l^C)\geq 2\varepsilon^{\frac{1}{4}}\}$, where for any set $A\subset M$, $A^C$ is the complement of  $A$. Using Chebyshev's inequality,  we obtain
 \begin{equation*}
   \mu(M_l^C)=\int\mu_{x}^{\eta}(M_l^C)\textrm{d}\mu(x)\leq\frac{\int \mu_{x}^{\eta}(D_l^C)\textrm{d}\mu(x)}{2\varepsilon^{\frac{1}{4}}}=\frac{\mu(D_l^C)}{2\varepsilon^{\frac{1}{4}}}\leq\frac{2\sqrt{\varepsilon}}
   {2\varepsilon^{\frac{1}{4}}}=\varepsilon^{\frac{1}{4}},
 \end{equation*}
 for any $l\geq l_0$.
Thus for any $l\geq l_0$, $\mu(M_l)\geq 1-\varepsilon^{\frac{1}{4}}$. The sets $D_l$ are nested, i.e. $D_1\subset D_2\subset\cdots$. Then fix some $l_1>l_0$, for any $x\in M_{l_{1}}$, $l\geq l_1$ we have
\begin{equation}\label{9}
  \mu_{x}^{\eta}(D_l)\geq\mu_{x}^{\eta}(D_{l_{1}})\geq 1-2\varepsilon^{\frac{1}{4}}.
\end{equation}
 According to Lemma \ref{lemma3.2}, we can find a subset $M_1\subset M$ with $\mu(M_1)=1$ such that for any $x\in M_1$, there exists set $G_x$ with $\mu_{x}^{\eta}(G_x)=1$ such that for any $y\in G_x$,
 \begin{equation*}
   \lim\limits_{n\rightarrow \infty}\frac{-\log\mu_{x}^{\eta}(\xi_0^{n-1}(y))}{n}=h_{\mu}(f, \xi|\eta).
 \end{equation*}
Let $I=M_1\cap M_{l_{1}}$. Clearly, $\mu(I)\geq 1-\varepsilon^{\frac{1}{4}}$.

(ii) For $n\in \mathbb N$ and given a point $y\in M$, we call the collection
\begin{equation*}
C(n, y):=({\xi}(y), {\xi}(f(y)), \cdots, {\xi}(f^{n-1}(y))
\end{equation*}
the $(\xi, n)$-name of $y$.  Since each point in one element $V$ of $\xi_{0}^{n-1}$ has the same $(\xi, n)$-name, we can define
\begin{equation*}
  C(n, V):=C(n, y)
\end{equation*}
for any $y\in V$, which is called the $(\xi, n)$-name of $V$.

For $n \in \mathbb{N}$ and $\xi$, we give a metric $d_{n}^{\xi}$ between  $(\xi, n)$-names of $y$ and $z$ as follows:
\begin{equation*}
d_{n}^{\xi}(C(n, y), C(n, z))=\frac{1}{n}\#\{0 \leq i \leq n-1 : {\xi}(f^i(y))\neq {\xi}(f^i(z))\}.
\end{equation*}
It can also be viewed as a semi-metric on $M$.

Fix $\hat x\in I$ and $l_2\geq l_1$. According to Lemma \ref{lemma ball 1}, for $\gamma>0$ small enough,  $B^u_{n}(y, {\gamma})\subset B_n(y, \gamma)$, for any $y\in M$. If $z\in B(y,\gamma)$, then either $y$ and
$z$  belong to the same element of $\xi$ or $y\in U_{\gamma}(\xi)$, $z\notin \xi(y)$. Hence if $y \in D_{l_2}$, $n\geq l_2$ and $z \in B^u_{n}(y, {\gamma})$, the distance $d_{n}^{\xi}$ between $(\xi, n)$-names of $y$ and $z$ does not exceed ${\varepsilon}$, i.e.
\begin{equation*}
  d_{n}^{\xi}(C(n, y), C(n, z))\leq {\varepsilon}.
\end{equation*}
 Furthermore, for $y \in D_{l_2}$, $n\geq l_2$, $B^u_{n}(y, \gamma)$ is contained in the set of points $z$ whose $(\xi, n)$-names are ${\varepsilon}$-close to the $(\xi, n)$-name of $y$, i.e.
\begin{equation}\label{14}
B^u_{n}(y, \gamma)\subset B_{d_{n}^{\xi}}(y, {\varepsilon}).
\end{equation}

By Stirling's formula, there exists a large number $l_3\in \mathbb N$ and for any $n\geq l_3$, it can be shown that the total number $K_n$ of such $(\xi, n )$-names consisting of $B_{d_{n}^{\xi}}(y, {\varepsilon})$ admits the following estimate:
\begin{equation}\label{15}
K_n  \leq \sum_{j=0}^{[n{\varepsilon}]}C_{n}^{j}(\#\xi-1)^j \leq \sum_{j=0}^{[n{\varepsilon}]}C_{n}^{j}(\#\xi)^j\leq \exp(({\varepsilon} + \Diamond)n),
\end{equation}
where
\begin{equation*}
  \Diamond={\varepsilon}\log(\#\xi)-{\varepsilon}\log{\varepsilon}
  -(1-{\varepsilon})\log(1-{\varepsilon}).
\end{equation*}

For $n\geq\max\{l_2, l_3\}$, set
\begin{equation*}
  \mathcal U:=\left\{B^u_{n}(y_i, \frac{\gamma}{2}): i=1, 2, \cdots, N^u_{\mu_{\hat x}^{\eta}}(n, \frac{\gamma}{2}, \delta)\right\}
\end{equation*}
with $\mu_{\hat x}^{\eta}(F_n)>1-\delta$, where
\begin{equation*}
  F_n:=\bigcup_{i=1}^{ N^u_{\mu_{\hat x}^{\eta}}(n, \frac{\gamma}{2}, \delta)}B^u_{n}(y_i, \frac{\gamma}{2}).
\end{equation*}
According to  (\ref{9}) and  $\varepsilon^{\frac{1}{4}}<\frac{1-\delta}{4}$, then $\mu_{\hat x}^{\eta}(F_n\cap D_n)>1-\delta-2\varepsilon^{\frac{1}{4}}>\frac{1-\delta}{2}$. For $i=1, 2, \cdots,  N^u_{\mu_{\hat x}^{\eta}}(n, \frac{\gamma}{2}, \delta)$, if $B^u_{n}(y_i, \frac{\gamma}{2})\cap D_n\neq \emptyset$, we choose any $z_i\in B^u_{n}(y_i, \frac{\gamma}{2})\cap D_n$. Then  we apply the relation (\ref{14}), so we have
\begin{equation*}
   B^u_{n}(y_i, \frac{\gamma}{2})\cap D_n\subset B^u_{n}(z_i, \gamma)\subset B_{d_n^{\xi}}(z_i, {\varepsilon}).
\end{equation*}
Thus,
\begin{equation*}
  F_n\cap D_n\subset S_n,
\end{equation*}
where
\begin{equation*}
 S_n=\bigcup_{\{i:B^u_{n}(y_i, \frac{\gamma}{2})\cap D_n\neq \emptyset\}} B_{d_n^{\xi}}(z_i, {\varepsilon}).
\end{equation*}
Let
\begin{equation*}
  \mathcal P_n=\left\{V\in \xi_{0}^{n-1} :  d_n^{\xi}(C(n, V), C(n, z_i))<{\varepsilon}, \textrm{ for some } i=1, 2, \cdots, N^u_{\mu_{\hat x}^{\eta}}(n, \frac{\gamma}{2}, \delta)\right\}.
\end{equation*}
It is clear that $S_n=\bigcup_{V\in \mathcal P_n}V$ and
\begin{equation*}
  \#\mathcal P_n\leq  N^u_{\mu_{\hat x}^{\eta}}(n, \frac{\gamma}{2}, \delta)\cdot K_n.
\end{equation*}

By Lemma \ref{lemma3.2} and Egorov Theorem, there exists a large number $l_4>\max\{l_2, l_3\}$ such that, $\mu_{\hat x}^{\eta}(T_n)\geq
\frac{1-\delta}{4}$ for each $n\geq l_4$, where
\begin{equation*}
  T_n=\{y\in S_n: \mu_{\hat x}^{\eta}(\xi_0^{n-1}(y))\leq \exp(-(h_{\mu}(f, \xi|\eta)-\varepsilon)n)\}.
\end{equation*}
Write $t_n:=\#\{\xi_0^{n-1}(y): y\in T_n\}$.  Then
\begin{equation*}
  \frac{(1-\delta)\exp((h_{\mu}(f, \xi|\eta)-\varepsilon)n)}{4}\leq t_n\leq \#\mathcal P_n\leq N^u_{\mu_{\hat x}^{\eta}}(n, \frac{\gamma}{2}, \delta)\cdot K_n.
\end{equation*}
Hence, we have
\begin{equation*}
  N^u_{\mu_{\hat x}^{\eta}}(n, \frac{\gamma}{2}, \delta)\geq  \frac{(1-\delta)\exp((h_{\mu}(f, \xi|\eta)-\varepsilon)n)}{4K_n}.
\end{equation*}
Noting that by Theorem \ref{Theorem 2.1} $h_{\mu}(f, \xi|\eta)=h^u_{\mu}(f)$, and using (\ref{15}), we obtain
\begin{eqnarray*}
  \liminf\limits_{n\rightarrow \infty}\frac{\log   N^u_{\mu_{\hat x}^{\eta}}(n, \frac{\gamma}{2}, \delta)}{n} &\geq& h_{\mu}(f, \xi|\eta)-\varepsilon-\limsup\limits_{n\rightarrow \infty}\frac{\log K_n}{n}+\lim\limits_{n\rightarrow \infty}\frac{1}{n}\log\frac{1-\delta}{4} \\
  &\geq& h_{\mu}(f, \xi|\eta)-\varepsilon-({\varepsilon}+\Diamond)\\
  &=& h^u_{\mu}(f)-\varepsilon-({\varepsilon}+\Diamond)\\
  &=&  h^u_{\mu}(f)-2\varepsilon-\Diamond.
\end{eqnarray*}
Let $\varepsilon\rightarrow 0$. Since $\gamma<\varepsilon$, $\lim\limits_{\varepsilon\rightarrow 0}\Diamond=0$, $\hat x\in I$ and $\mu(I)\geq 1-\varepsilon^{\frac{1}{4}}$, we have
\begin{equation*}
  \lim\limits_{\gamma\rightarrow 0} \liminf\limits_{n\rightarrow \infty}\frac{ \log N^u_{\mu_{\hat x}^{\eta}}(n, \frac{\gamma}{2}, \delta)}{n}\geq h^u_{\mu}(f),
\end{equation*}
for $\mu$-a.e. $\hat x\in M$.

Therefore, for every $0<\delta<1$, we obtain
\begin{equation*}
\lim\limits_{\varepsilon \rightarrow 0}\liminf\limits_{n\rightarrow \infty}\frac{\log {N}^u_{\mu_{x}^{\eta}}(n, \varepsilon, \delta)}{n}\geq h^u_{\mu}(f)
\end{equation*}
for $\mu$-a.e. $x\in M$.
\end{proof}

\section{Proof of Theorem \ref{theorem3}}
In this section, we will prove Theorem \ref{theorem3}.
\begin{proof}
(1) Firstly, we will prove that for each $0<\delta<1$,
\begin{equation*}
\limsup\limits_{n\rightarrow \infty}\frac{\log {N}^{u}_{\mu_{x}^{\eta}}(n, \xi, \delta)}{n}\leq h^{u}_{\mu}(f)
\end{equation*}
for   $\mu$-a.e. $x\in M$.

   Noting that $\mu$ is ergodic and according to Lemma \ref{lemma3.2}, there exists a subset $M_1\subset M$ with $\mu(M_1)=1$ such that for any $x\in M_1$, there exists a set $G_{x}\subset\eta(x)$ with $\mu_{x}^{\eta}(G_{x})=1$ such that for any $y\in G_{x}$,
 \begin{equation*}
   \lim\limits_{n\rightarrow \infty}\frac{-\log\mu_{x}^{\eta}(\xi_0^{n-1}(y))}{n}=h_{\mu}(f, \xi|\eta).
 \end{equation*}
 Fix $x\in M_1$. For $n\in \mathbb N$ and $\gamma>0$, set
 \begin{equation*}
   Y_{n} = \{y\in  G_{x}: \mu_{x}^{\eta}(\xi_{0}^{n-1}(y))>\exp(-(h_{\mu}(f, \xi|\eta)+\gamma)n)\}\subset  \bigcup_{V\in\mathcal J_n}V\cap \eta(x),
 \end{equation*}
where $\mathcal J_n=\{V\in \xi_0^{n-1}: \mu_{x}^{\eta}(V)>\exp(-(h_{\mu}(f, \xi|\eta)+\gamma)n)\}.$
Then for each $\gamma >0$, $ \lim\limits_{n\rightarrow \infty}\mu_{x}^{\eta}(Y_{n})=1.$
Thus, for sufficiently large $n\in \mathbb N$, we have $\mu_{x}^{\eta}(Y_{n})>1-\delta$.  Since
\begin{equation*}
\#\mathcal J_n=\# \{V\in \xi_0^{n-1}: \mu_{x}^{\eta}(V)>\exp(-(h_{\mu}(T, \xi|\eta)+\gamma)n)\}\leq \exp((h_{\mu}(f, \xi|\eta)+\gamma)n),
\end{equation*}
the set $Y_{n}$ contains at most $\exp((h_{\mu}(f, \xi|\eta)+\gamma)n)$ elements  of  $\xi_{0}^{n-1}\cap \eta(x)$, where $\xi_{0}^{n-1}\cap \eta(x)=\{V\cap \eta(x): V\in\xi_{0}^{n-1}\}$. Thus
\begin{equation*}
 {N}^u_{\mu_{x}^{\eta}}(n, \xi, \delta)\leq \exp((h_{\mu}(f, \xi|\eta)+\gamma)n).
\end{equation*}
Then for any $\gamma >0$,
\begin{equation*}
\limsup\limits_{n\rightarrow \infty}\frac{\log {N}^u_{\mu_{x}^{\eta}}(n, \xi, \delta)}{n}\leq h_{\mu}(f, \xi|\eta)+\gamma.
\end{equation*}
Since $\gamma$ can be taken arbitrarily small and by Theorem \ref{Theorem 2.1} $h_{\mu}(f, \xi|\eta)=h^u_{\mu}(f)$, we obtain
\begin{equation*}
  \limsup\limits_{n\rightarrow \infty}\frac{\log {N}^u_{\mu_{x}^{\eta}}(n, \xi, \delta)}{n}\leq h^u_{\mu}(f)
\end{equation*}
for every $x\in M_1$. Noting that $\mu(M_1)=1$, we have
\begin{equation*}
 \limsup\limits_{n\rightarrow \infty}\frac{\log {N}^u_{\mu_{x}^{\eta}}(n, \xi, \delta)}{n}\leq h^u_{\mu}(f)
\end{equation*}
 for $\mu$-a.e. $x\in M$.

(2) Secondly, we will turn to prove that for every $0<\delta<1$,
\begin{equation*}
\liminf\limits_{n\rightarrow \infty}\frac{\log {N}^u_{\mu_{x}^{\eta}}(n, \xi, \delta)}{n}\geq h^u_{\mu}(f)
\end{equation*}
for $\mu$-a.e. $x\in M$.

 Given $\eta\in \mathcal P_{\varepsilon_0}^u$ and  $\xi \in \mathcal P_{\varepsilon_0}$ with   $H_{\mu}(\xi|\eta)<\infty$.
By Lemma \ref{lemma3.2}, we can find a subset $M_1\subset M$ with $\mu(M_1)=1$ such that for any $x\in M_1$, there exists set $G_x$ with $\mu_{x}^{\eta}(G_x)=1$ such that for any $y\in G_x$,
 \begin{equation*}
   \lim\limits_{n\rightarrow \infty}\frac{-\log\mu_{x}^{\eta}(\xi_0^{n-1}(y))}{n}=h_{\mu}(f, \xi|\eta).
 \end{equation*}
Now, fix $\hat x\in M_1$.
For $n\in \mathbb N$, set
\begin{equation*}
  \mathcal U:=\left\{\xi_{0}^{n-1}(y_i): i=1, 2, \cdots, N^u_{\mu_{\hat x}^{\eta}}(n, \xi, \delta)\right\}
\end{equation*}
with $\mu_{\hat x}^{\eta}(F_n)>1-\delta$, where
\begin{equation*}
  F_n:=\bigcup_{i=1}^{ N^u_{\mu_{\hat x}^{\eta}}(n, \xi, \delta)}\xi_{0}^{n-1}(y_i).
\end{equation*}

 Let $\varepsilon>0$ small enough.
By Lemma \ref{lemma3.2} and Egorov Theorem, there exists a large number $N\in \mathbb N$ such that, $\mu_{\hat x}^{\eta}(T_n)\geq
(1-\delta)/2$ for each $n\geq N$, where
\begin{equation*}
  T_n=\{y\in F_n: \mu_{\hat x}^{\eta}(\xi_0^{n-1}(y))\leq \exp(-(h_{\mu}(f, \xi|\eta)-\varepsilon)n)\}.
\end{equation*}
Write $t_n:=\#\{\xi_0^{n-1}(y): y\in T_n\}$.  Then
\begin{equation*}
  \frac{(1-\delta)\exp((h_{\mu}(f, \xi|\eta)-\varepsilon)n)}{2}\leq t_n\leq  N^u_{\mu_{\hat x}^{\eta}}(n, \xi, \delta).
\end{equation*}

Noting that by Theorem \ref{Theorem 2.1} $h_{\mu}(f, \xi|\eta)=h^u_{\mu}(f)$, we obtain
\begin{eqnarray*}
  \liminf\limits_{n\rightarrow \infty}\frac{\log   N^u_{\mu_{\hat x}^{\eta}}(n, \xi, \delta)}{n} &\geq& h_{\mu}(f, \xi|\eta)-\varepsilon+\lim\limits_{n\rightarrow \infty}\frac{1}{n}\log\frac{1-\delta}{2} \\
  &=& h_{\mu}(f, \xi|\eta)-\varepsilon\\
  &=& h^u_{\mu}(f)-\varepsilon.
\end{eqnarray*}
Let $\varepsilon\rightarrow 0$,  we have
\begin{equation*}
\liminf\limits_{n\rightarrow \infty}\frac{ \log N^u_{\mu_{\hat x}^{\eta}}(n, \xi, \delta)}{n}\geq h^u_{\mu}(f).
\end{equation*}
Note that $\hat x\in M_1$ and $\mu(M_1)=1$.
Therefore, for every $0<\delta<1$, we obtain
\begin{equation*}
\liminf\limits_{n\rightarrow \infty}\frac{\log {N}^u_{\mu_{x}^{\eta}}(n, \xi, \delta)}{n}\geq h^u_{\mu}(f)
\end{equation*}
for $\mu$-a.e. $x\in M$.

\end{proof}

\textbf{Acknowlegments:}

The first  author was  supported by NNSF of China (11401581, 11971236). The second author was supported by NNSF of China (11671208, 11431012). And the third author was  supported by NNSF of China (11401581). At the end, we would like to express our gratitude to Tianyuan Mathematical Center in Southwest China, Sichuan University and Southwest Jiaotong University for their support and hospitality.

 \small

\end{document}